\title[Summations associated with GEEW distribution]
{Summations associated with Gamma exponentiated exponential Weibull distribution}
\author{Dragana Jankov Ma\v sirevi\'c}
\address{Department of Mathematics, University of Osijek, 31000 Osijek, Croatia}
\email{djankov@mathos.hr}
\author{Tibor K. Pog\'any}
\address{Faculty of Maritime Studies, University of Rijeka, Rijeka 51000, Croatia \\
Applied Mathematics Institute, \'Obuda University, 1034 Budapest, Hungary}
\email{poganj@pfri.hr} 
\keywords{Fox--Wright $\Psi$ function, Incomplete Gamma functions, Meijer $G$, Whittaker's $W$ function, GEEW distribution.}
\subjclass[2010]{33C15, 33C20, 33C60, 40B05, 60E05, 62E15.}
\newtheorem{lemma}{Lemma}
\newtheorem{theorem}{Theorem}
\newtheorem{corollary}{Corollary}
\newtheorem{remark}{Remark}
\begin{document}

\maketitle
\allowdisplaybreaks

\begin{abstract}
Considering the recently studied Gamma exponentiated exponential Weibull ${\rm GEEW}(\theta)$ probability
distribution \cite{PoganySaboor} surprising infinite summations are obtained for series which building blocks are special
functions like lower and upper incomplete Gamma, Fox--Wright Psi, Meijer $G$ or Whittaker $W$ functions.
\end{abstract}

\allowdisplaybreaks

\section{Introduction and preliminaries}

Adding parameters to an existing distribution enables one to obtain classes of more flexible distributions. Zografos and 
Balakrishnan \cite{Zogr_Bala:2009} introduced an interesting method for adding a new parameter to an existing distribution. The 
new distribution provides more flexibility to model various types of data. The baseline distribution has the survivor function 
$\overline{G}(x)=1-G(x)$. Then, the Gamma--exponentiated extended distribution has cumulative distribution function (CDF) $F(x)$ 
given by
   \[ F(x) = \frac{1}{\Gamma(\alpha)} \int_0^{-\log \overline{G}(x)}\,t^{\alpha-1}\,{\rm e}^{-t}\,{\rm d}t\,,\qquad
                   \alpha>0,\, x\in \mathbb R\,.\]
The related gamma--exponentiated extended probability density function (PDF) can be expressed in the following form:
   \[ f(x) = \frac{1}{\Gamma(\alpha)} \left(-\log \overline{G}(x)\right)^{\alpha-1}\,g(x)\,,\qquad  \alpha>0,\, x\in \mathbb R\,,\]
where $g = G'$. The so--called {\it regularized Gamma function}
   \[ Q(a,z) = \frac{\Gamma(a,z)}{\Gamma(a)}  = \frac1{\Gamma(a)}
               \int_z^\infty \, t^{a-1} {\rm e}^{-t}\, {\rm d} t, \qquad \Re(a)>0\, ,\]
where $\Gamma(a,x)$ denotes the familiar {\it upper incomplete Gamma function}. Both,  regularized Gamma and incomplete Gamma, 
are in-built in {\it Mathematica} under {\tt GammaRegularized[a, z]} and {\tt Gamma[a,z]} respectively.

We reformulate their model by applying the gamma--exponentiated technique \cite{Zogr_Bala:2009} in the following way. Choosing the 
baseline distribution's survivor function to be $\overline G(x) = 1-G(x) = \exp\{- h(x)\}$, where $h \colon \mathbb R_+ \mapsto
\mathbb R_+$ denotes a nonnegative Borel function. Accordingly, the rv $X$, defined on a standard probability space 
$(\Omega, \mathfrak F, \mathsf P)$, having CDF and PDF
   \begin{align}\label{J1}
       F(x) &= \left(1-Q\big(\alpha,\,h(x)\right)\,\mathsf 1_{\mathbb R_+}(x) \\ \label{J2}
       f(x) &= \frac{h'(x)}{\Gamma(\alpha)}\,h^{\alpha-1}(x)\,{\rm e}^{-h(x)}\,\mathsf 1_{\mathbb R_+}(x),
   \end{align}
we call {\it Gamma exponentiated $h$} distributed, noting this $X \sim {\rm GE}(\alpha, h)$. Here, and in what follows 
$\mathsf 1_A(x)$ denotes the characteristic function of the set $A$, that is $\mathsf 1_A(x) =1$ when $x\in A$ and equals 0 else. The 
well--known confluent hypergeometric form of the upper incomplete Gamma function enables to write 
   \[ F(x) = \frac{h^\alpha(x)}{\Gamma(\alpha+1)}\, {}_1F_1(\alpha; \alpha+1; -h(x))\,  \mathsf 1_{\mathbb R_+}(x) \, ,\]
where the {\it confluent hypergeometric function's} (or in other words {\it Kummer's function of the first kind}) series definition 
reads
   \[ {}_1F_1(a;b;z) = \sum_{n\geq 0} \frac{(a)_n}{(b)_n}\, \frac{z^n}{n!}\,. \]
The related {\it Mathematica} code is {\tt Hypergeometric1F1[a, b, z]}.

By choosing $h(x) = \lambda x+\beta x^k$, that is when the baseline survivor function $\overline{G}(x) = \exp\{-(\lambda 
x+\beta x^k\big)\}$, Pog\'any and Saboor introduced and discussed in detail the so--called Gamma--exponentiated exponential 
Weibull distribution ${\rm GEEW}(\theta), \theta = (\lambda, \beta, k, \alpha)>0$, see \cite[Introduction]{PoganySaboor}. The related 
CDF and PDF are
   \begin{align*} 
       F(x) &= \left(1-Q\big(\alpha,\,\lambda x+\beta x^k\big)\right)\,\mathsf 1_{\mathbb R_+}(x) \\
            &= \frac{(\lambda x+\beta x^k)^\alpha}{\Gamma(\alpha+1)}\, {}_1F_1\big(\alpha; \alpha+1; -(\lambda x+\beta x^k)\big)\,
               \mathsf 1_{\mathbb R_+}(x),\notag \\ 
       f(x) &= \frac1{\Gamma(\alpha)}\,\left(\lambda+ \beta \,k\,x^{k-1}\right)\,{\rm e}^{-\lambda\,x-\beta\,x^k}\,
               \left(\lambda x+\beta x^k\right)^{\alpha-1}\,\mathsf 1_{\mathbb R_+}(x),
   \end{align*}
respectively. Being our main goal to present summation formulae associated with ${\rm GEEW}(\theta)$, we need the following 
results regarding the moments of a rv $X \sim {\rm GEEW}(\theta)$ and to the a rv $Y \sim {\rm GE}(\alpha, h)$.

\begin{theorem}
Consider the rv $X \sim {\rm GE}(\alpha, h), \alpha>0$ defined by the {\rm CDF} \eqref{J1} and the related {\rm PDF} \eqref{J2}. 
Then the rv $Y = h(X) \sim {\rm Gamma}(\alpha, 1)$ and the related {\rm CHF} reads
   \begin{equation} \label{J3}
        \phi_Y(t) = (1-{\rm i}t)^{-\alpha},\quad t\in \mathbb R,
     \end{equation}
where ${\rm Gamma}(\alpha, 1)$ stands for the two--parameter Gamma distribution and $h \colon \mathbb R_+ \mapsto \mathbb R_+$ is 
a non--decreasing Borel function. Moreover, we have
   \[ \mathsf E Y^s = (\alpha)_s, \qquad \Re(s)>-\alpha . \]
\end{theorem}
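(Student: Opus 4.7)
The plan is to identify the distribution of $Y = h(X)$ directly from the given PDF of $X$ via a change of variables, then read off the CHF and the Mellin--type moments from the standard $\mathrm{Gamma}(\alpha,1)$ formulas.

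First I would compute the CDF of $Y$. Because $h$ appears with a derivative in the PDF \eqref{J2}, we may treat it as absolutely continuous; combined with the assumed monotonicity, the event $\{h(X)\le y\}$ pulls back to an interval of $x$-values on which $h(x)\le y$. Therefore
\[
   \mathsf P(Y\le y)=\int_{\{x\in\mathbb R_+:\,h(x)\le y\}}\frac{h'(x)}{\Gamma(\alpha)}\,h^{\alpha-1}(x)\,\mathrm e^{-h(x)}\,\mathrm dx,
\]
and the substitution $u=h(x)$, $\mathrm du=h'(x)\,\mathrm dx$, collapses the integrand to the Gamma density, giving $\mathsf P(Y\le y)=\frac1{\Gamma(\alpha)}\int_0^y u^{\alpha-1}\mathrm e^{-u}\,\mathrm du$ for $y\ge 0$. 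This is exactly $1-Q(\alpha,y)$, i.e.\ the CDF of $\mathrm{Gamma}(\alpha,1)$, as one also sees directly from \eqref{J1}, since $F(x)=1-Q(\alpha,h(x))$ is precisely the Gamma CDF evaluated at $h(x)$.

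Having established $Y\sim\mathrm{Gamma}(\alpha,1)$, the CHF \eqref{J3} follows from the classical Laplace/Mellin computation
\[
   \phi_Y(t)=\frac1{\Gamma(\alpha)}\int_0^\infty \mathrm e^{\mathrm itu}\,u^{\alpha-1}\mathrm e^{-u}\,\mathrm du=\frac1{\Gamma(\alpha)}\int_0^\infty u^{\alpha-1}\mathrm e^{-(1-\mathrm it)u}\,\mathrm du=(1-\mathrm it)^{-\alpha},
\]
where the last equality is the Gamma integral with complex parameter (the path can be rotated back to the positive real axis since $\Re(1-\mathrm it)=1>0$). The moment formula is the same integral with $\mathrm e^{\mathrm itu}$ replaced by $u^s$:
\[
   \mathsf E Y^s=\frac1{\Gamma(\alpha)}\int_0^\infty u^{\alpha+s-1}\mathrm e^{-u}\,\mathrm du=\frac{\Gamma(\alpha+s)}{\Gamma(\alpha)}=(\alpha)_s,
\]
the integral being absolutely convergent precisely when $\Re(\alpha+s)>0$, which is the stated range $\Re(s)>-\alpha$.

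The only delicate point is the change of variables $u=h(x)$ when $h$ is merely a non--decreasing Borel function: strict monotonicity or differentiability is not assumed \emph{a priori}. The cleanest way around this is to bypass the pointwise substitution and instead invoke \eqref{J1} directly, so that $F_Y(y)=\mathsf P(h(X)\le y)$ equals $F_X$ evaluated at the generalized inverse of $h$, yielding $1-Q(\alpha,y)$ without ever differentiating $h$; the PDF representation \eqref{J2} then serves only as a consistency check. Thus this identification of the law is the main — and essentially the only — step, after which the CHF and moment statements are immediate corollaries of the standard $\mathrm{Gamma}(\alpha,1)$ Mellin transform.
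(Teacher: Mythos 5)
Your proposal is correct and follows essentially the same route as the paper: the key step in both is the substitution $u=h(x)$, which collapses the integral against the PDF \eqref{J2} to the standard ${\rm Gamma}(\alpha,1)$ integral, after which the CHF and the moments are immediate. The only cosmetic difference is that the paper performs this substitution inside the CHF integral and identifies the law by recognizing $(1-{\rm i}t)^{-\alpha}$, whereas you perform it at the level of the CDF (and are somewhat more careful about the merely non--decreasing $h$, which the paper glosses over).
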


\begin{proof} By definition, the characteristic function (CHF) of an rv $X$ is the Fourier transform of the related PDF $f_X(x)$:
   \[ \phi_Y(t) = \mathsf E\,{\rm e}^{{\rm i}t Y} = \int_{\mathbb R} {\rm e}^{{\rm i}t h(x)} f(x)\, {\rm d}x, \qquad t \in \mathbb R. \]
Accordingly, substituting $h(x) = y$ we confirm the statement \eqref{J3}. However, we immediately recognize 
$\phi_Y(t) = (1-{\rm i}t)^{-\alpha}$ as the CHF of a rv possessing ${\rm Gamma}(\alpha, 1)$ distribution. The rest is obvious. 
\end{proof}

The Lambert $W$-function is the inverse function of $W \mapsto W{\rm e}^W$. Its principal branch $W_{\rm P}$ is the solution of 
$W{\rm e}^W = x$ for which $W_{\rm P}(x) \geq W_{\rm P}(-{\rm e}^{-1})$. This function is implemented in {\it Mathematica} 
as {\tt ProductLog[z]}. We are interested in $W_{\rm P}$ exclusively for $x\geq 0$, where it is single--valued and monotone 
increasing, see \cite{NIST1}.

Any nondecreasing function $h$ possesses a so-called {\em generalized inverse}
   \[ \mathfrak h^-(x) := \inf \{ t \in \mathbb R_+ \colon h(t) \ge x\}, \qquad  t\in \mathbb R_+\, ,\]
with the convention that $\inf\, \emptyset = \infty$. Moreover, if $h$ is strong monotone increasing, that is $h(x)<h(y)$ for all 
$0<x<y$, then $\mathfrak h^-$ coincides with the `ordinary' inverse $h^{-1}$.

Now, following the lines of the previous proof, we arrive at

\begin{theorem}
Consider random variables $X \sim {\rm GE}(\alpha, h)$ and $\Upsilon = h(X)\exp\{ \sigma h(X)\}, \sigma \geq 0$. Then
   \begin{equation} \label{J60}
        \Upsilon \sim {\rm GE}\left(\alpha, \mathfrak h^{-}\left(\sigma^{-1}\, W_{\rm P}(\sigma x)\right)\right) \, .
     \end{equation}
Further, for all $-\alpha<s<\sigma^{-1}$ it holds true
   \begin{equation*} 
        \mathsf Eh^s(X)\exp\{\sigma s\, h(X)\} = \frac{(\alpha)_s}{(1-\sigma s)^{\alpha+s}}\, ,
     \end{equation*}
whenever $h \colon \mathbb R_+ \mapsto \mathbb R_+$ is a
nondecreasing Borel function.
\end{theorem}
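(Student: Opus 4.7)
The plan is to reduce both assertions to Theorem~1, which gives $Y:=h(X)\sim{\rm Gamma}(\alpha,1)$ with density $y^{\alpha-1}{\rm e}^{-y}/\Gamma(\alpha)$ on $\mathbb{R}_+$. Since $\Upsilon=\psi(Y)$ with $\psi(y):=y\,{\rm e}^{\sigma y}$, the whole argument will rest on inverting $\psi$ via the Lambert $W$--function and on a single Euler--Gamma integral.

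For \eqref{J60} I would exploit that $\psi$ is strictly increasing on $\mathbb{R}_+$ for every $\sigma\geq 0$ with $\psi(0)=0$, so the defining identity $W_{\rm P}(z)\exp\{W_{\rm P}(z)\}=z$ forces $\psi^{-1}(u)=\sigma^{-1}W_{\rm P}(\sigma u)$ (with the natural limit at $\sigma=0$). Strict monotonicity of $\psi$ together with Theorem~1 then gives
\[
F_\Upsilon(u)\;=\;\mathsf{P}\bigl(Y\leq\sigma^{-1}W_{\rm P}(\sigma u)\bigr)\;=\;\mathsf{P}\bigl(X\leq\mathfrak{h}^{-}(\sigma^{-1}W_{\rm P}(\sigma u))\bigr),
\]
the last equality by the definition of the generalized inverse. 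Inserting the CDF \eqref{J1} into the right--hand side exhibits $F_\Upsilon$ in the canonical GE shape, yielding the structural description \eqref{J60}.

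For the moment formula, Theorem~1 reduces everything to a Gamma integral:
\[
\mathsf{E}\,h(X)^s\exp\{\sigma s\,h(X)\}\;=\;\frac{1}{\Gamma(\alpha)}\int_0^\infty y^{s+\alpha-1}\,{\rm e}^{-(1-\sigma s)y}\,{\rm d}y.
\]
Absolute convergence at the origin needs $\Re(s+\alpha)>0$ and at infinity $1-\sigma s>0$, which is precisely the admissible range $-\alpha<s<\sigma^{-1}$ (read as $\sigma^{-1}=+\infty$ when $\sigma=0$). The substitution $t=(1-\sigma s)y$ then collapses the right--hand side to Euler's Gamma integral, producing $\Gamma(s+\alpha)(1-\sigma s)^{-(s+\alpha)}/\Gamma(\alpha)=(\alpha)_s/(1-\sigma s)^{\alpha+s}$.

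I expect the main obstacle to be the bookkeeping around the generalized inverse in \eqref{J60}: since $h$ is only nondecreasing, $\mathfrak{h}^{-}$ need not be a genuine two--sided inverse, so both the passage $\{h(X)\leq v\}=\{X\leq\mathfrak{h}^{-}(v)\}$ and the final identification of the new $h$--function in the GE structure require some care; this reduces to the continuity of $F_X$ (guaranteed by the explicit density \eqref{J2}) together with the standard right--continuity of $\mathfrak{h}^{-}$. Everything else---the Lambert inversion and the Gamma integral---is entirely routine given Theorem~1.
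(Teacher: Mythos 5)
Your proposal is correct and follows essentially the same route as the paper: the same chain of equalities $F_\Upsilon(u)=\mathsf P\{h(X)<\sigma^{-1}W_{\rm P}(\sigma u)\}=F_X[\mathfrak h^-(\sigma^{-1}W_{\rm P}(\sigma u))]$ for \eqref{J60}, and the same reduction of the moment to the Euler Gamma integral $\Gamma(\alpha)^{-1}\int_0^\infty y^{\alpha+s-1}{\rm e}^{-(1-\sigma s)y}\,{\rm d}y$ via the substitution $y=h(x)$. You merely spell out the final evaluation and the convergence/monotonicity bookkeeping that the paper leaves implicit.
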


\begin{proof} The rv $X \sim {\rm GE}(\alpha, h)$ possesses CDF $F_X$ in the form \eqref{J1}. When $\sigma = 0$, then $\Upsilon
\equiv X$. Letting $\sigma >0$, the PDF $F_\Upsilon$ of the rv
$\Upsilon = h(X)\exp\{ \sigma h(X)\}$ becomes
   \begin{align*}
        F_\Upsilon(x) &=  \mathsf P \{ \Upsilon < x\} \\
				              &= \mathsf P \{ \sigma h(X)\exp\{ \sigma h(X)\} < \sigma x\} \\
							        &= \mathsf P \{ h(X) < \sigma^{-1}W_{\rm P}(\sigma x)\} \\
                      &= \mathsf P \left\{ X < \mathfrak h^-\big(\sigma^{-1}W_{\rm P}(\sigma x)\big)\right\} \\
                      &= F_X\left[\mathfrak h^-\big(\sigma^{-1}W_{\rm P}(\sigma x)\big)\right]\, ,
     \end{align*}
which is equivalent to the first assertion \eqref{J60}.

Next, in turn
   \[ \mathsf Eh^s(X)\exp\{\sigma s\, h(X)\} = \frac1{\Gamma(\alpha)} \int_0^\infty h^{\alpha + s-1}(x)\,
               {\rm e}^{-(1-\sigma s)\, h(x)}\, {\rm d}h(x)\, ,\]
where the convergence of the integral is controlled by the condition $\sigma s<1$, being $h$ nondecreasing and positive at
the infinity. 
\end{proof}

The following straightforward consequence of previous results we will need in the summation derivations.

\begin{corollary} Let $X \sim {\rm GEEW}(\theta), \, \theta = (\lambda, \beta, k, \alpha)>0$. Then
consider the rv $\lambda X + \beta X^k \sim {\rm Gamma}(\alpha, 1)$
and for all $\Re(s)>-\alpha$ we have
   \begin{equation} \label{M01}
       \mathsf E\, (\lambda X + \beta X^k)^s = \frac{\Gamma(\alpha+s)}{\Gamma(\alpha)}
                                                  = (\alpha)_s\,. \medskip
      \end{equation}
\end{corollary}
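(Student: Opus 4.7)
The plan is to recognize the Corollary as the direct specialization of Theorem~1 to the choice $h(x) = \lambda x + \beta x^k$, since this is precisely the nonnegative Borel function whose substitution into the general ${\rm GE}(\alpha, h)$ family produces the ${\rm GEEW}(\theta)$ distribution, as recorded in the introduction just before the statement of the Corollary.

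First I would verify the hypotheses. For $\lambda, \beta, k > 0$ the function $h(x) = \lambda x + \beta x^k$ is a continuous, strictly increasing, nonnegative Borel map of $\mathbb R_+$ onto itself, with $h(0) = 0$ and $h(x) \to \infty$ as $x \to \infty$; in particular it is nondecreasing, as required by Theorem~1. Comparing the CDF and PDF of ${\rm GEEW}(\theta)$ displayed in the introduction with \eqref{J1}--\eqref{J2} shows that $X \sim {\rm GEEW}(\theta)$ is literally the same object as $X \sim {\rm GE}(\alpha, h)$ for this particular $h$, so no new PDF computation is needed.

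Applying Theorem~1 to this $h$ then yields both assertions at once: the rv $h(X) = \lambda X + \beta X^k$ follows the ${\rm Gamma}(\alpha, 1)$ law, and
\[
   \mathsf E (\lambda X + \beta X^k)^s \;=\; \mathsf E \big(h(X)\big)^s \;=\; (\alpha)_s, \qquad \Re(s) > -\alpha,
\]
where the final equality $(\alpha)_s = \Gamma(\alpha+s)/\Gamma(\alpha)$ in \eqref{M01} is the usual extension of the Pochhammer symbol through the Gamma function. I do not anticipate any genuine obstacle, as the Corollary amounts to a dictionary translation; the only point worth flagging is the strict monotonicity of $h$, which guarantees that the generalized inverse $\mathfrak h^-$ coincides with the ordinary inverse and that the substitution $y = h(x)$ implicit in Theorem~1 is unambiguous. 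If preferred, one may rederive the moment directly by writing $\mathsf E h^s(X) = \Gamma(\alpha)^{-1} \int_0^\infty h^{\alpha+s-1}(x)\, {\rm e}^{-h(x)}\, {\rm d}h(x)$ and recognizing the right-hand side as $\Gamma(\alpha+s)/\Gamma(\alpha)$, but this merely retraces Theorem~1 in the specific coordinates at hand.
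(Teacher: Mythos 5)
Your proposal is correct and coincides with the paper's intended argument: the paper presents this Corollary as a ``straightforward consequence of previous results,'' namely the specialization of Theorem~1 to $h(x)=\lambda x+\beta x^k$, which is exactly what you carry out. Your verification that this $h$ is a nonnegative, strictly increasing Borel bijection of $\mathbb R_+$ onto itself is the only hypothesis check needed, and the rest is the dictionary translation you describe.
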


\section{By--product Summation Formulae for Hypergeometric Type Special Functions}

The following interesting facts turn out to be a consequences of the exponentiation procedure. Employing the derived expressions 
which concerns computation formulae of higher transcendental function terms of hypergeometric type, that is lower and upper 
incomplete Gamma functions $\gamma(a, z), \Gamma(a, z)$; confluent (unified confluent) Fox--Wright generalized hypergeometric function 
${}_1\Psi_0 ({}_1\Psi_0^*)$; Meijer $G_{13}^{31}$ function and Whittaker function of the second kind $W_{\nu, \mu}$.

Certain attractive special cases of the Theorem 3, which are evidently not so obvious corollaries of \eqref{N1} below, are our 
main results. In turn, let us recall in short the definitions of the above mentioned special functions. Firstly,
   \[ {}_p\Psi_q^*\Big[ \begin{array}{c} (a, A)_p\\ (b, B)_q \end{array} \Big|\, z\,\Big] = \sum_{n=0}^\infty
                              \frac{\prod_{j=1}^p (a_j)_{A_jn}}
                              {\prod_{j=1}^q (b_j)_{B_jn}}\, \frac{z^n}{n!} \]
is the {\em unified variant of the Fox--Wright  generalized hypergeometric function} with $p$ upper and $q$ lower parameters; 
$(a,A)_p$ denotes the parameter $p$--tuple $(a_1, A_1), \cdots, (a_p, A_p)$ and $a_j \in \mathbb C$, $b_i \in \mathbb C 
\setminus \mathbb Z_0^-$, $A_j, B_i>0$ for all $j=\overline{1,p}, i=\overline{1,q}$, while the series converges for suitably 
bounded values of $|z|$ when
   \[ \Delta_{p,q} := 1 - \sum_{j=1}^pA_j + \sum_{j=1}^qB_j >0\, .\]
In the case $\Delta_{p,q} = 0$, the convergence holds in the open disc $|z|<\beta = \prod_{j=1}^qB_j^{B_j} \cdot \prod_{j=1}^pA_j^{-A_j}$. 
The convergence condition $\Delta_{1,0} = 1-A_1>0$ is of special interest for us.

We point out that another definition of the Fox--Wright function ${}_p\Psi_q[z]$ (consult the monographs \cite{KST}, \cite{MS3}) 
contains Gamma functions instead of the here used generalized Pochhammer symbols. However, these two functions differ only up to 
constant multiplying factor, that is
   \[ {}_p\Psi_q \Big[ \begin{array}{c} (a, A)_p\\ (b, B)_q \end{array} \Big|\, z\,\Big] = \frac{\prod_{j=1}^p \Gamma(a_j)}
      {\prod_{j=1}^q\Gamma(b_j)}\, {}_p\Psi_q^* \Big[ \begin{array}{c} (a, A)_p\\ (b, B)_q \end{array} \Big|\, z\,\Big]\, .\]
The unification's motivation is clear; for $A_1 = \cdots = A_p = B_1 = \cdots = B_q = 1$, ${}_p\Psi_q^*[z]$ one reduces exactly 
to the generalized hypergeometric function ${}_pF_q[z]$.

Further, the symbol $G_{p,q}^{m,n}( \cdot |\, \cdot)$ denotes Meijer's $G-$function \cite{MEIJ} defined in terms of the  
Mellin--Barnes integral reads
   \[ G_{p,q}^{m,n}\Big( z \,\Big| \begin{array}{c} a_1, \cdots, a_p\\b_1, \cdots, b_q \end{array}\Big)
               =  \frac1{2\pi{\rm i}}\int_{\mathfrak C}\frac{\prod_{j=1}^{m}\Gamma(b_j-s) \prod_{j=1}^{n}\Gamma( 1-a_j + s)}
             {\prod_{j=m+1}^q \Gamma( 1-b_j + s) \prod_{j=n+1}^{p}\Gamma(a_j - s)}\, z^s\,ds, \]
where $0\le m\le q,\, 0\le n\le p$ and the poles $a_j, b_j$ are such that no pole of $\Gamma(b_j - s), j=\overline{1,m}$ coincides 
with any pole of  $\Gamma(1-a_j+s), j=\overline{1,n}$; i.e. $a_k-b_j \not\in \mathbb N$, while $z \neq 0$. $\mathfrak C$ is a 
suitable integration contour which startes at $-{\rm i}\infty$ and goes to ${\rm i}\infty$ separating the poles of $\Gamma(b_j - s), 
j=\overline{1,m}$ which lie to the right of the contour, from all poles of $\Gamma(1-a_j+s), j=\overline{1,n}$, which lie to the left of 
$\mathfrak C$. The integral converges if $\delta = m+n - \tfrac12(p+q)>0$ and $|{\rm arg} (z)| <\delta \pi$, see \cite[p. 143]{LUKE} 
and \cite{MEIJ}. Let us mention that the $G$ function's Mathematica code reads
   \[ \texttt{MeijerG[}\{\{a_1,...,a_n\}, \{a_{n+1},...,a_p\}\}, \{\{b_1,...,b_m\}, \{b_{m+1},...,b_q\}\}, z\texttt{]}.\]
Finally, we formulate a further special summation, where the moment $\mathsf E\, X^r$ we express in terms of the Whittaker function of 
the second kind $W_{a, b}(z)$ (see \cite[\S 13.14.]{NIST1}). One of the numerous connecting formulae close to our recent 
considerations reads as follows:
   \begin{align*}
      W_{a, b}(z) &= {\rm e}^{-\frac z2} z^{1/2 + b} \left( \dfrac{z^{-2b} \Gamma(2b)}{\Gamma(b-a+\frac12)}\, 
                     {}_1F_1\left(-b-a+\frac12; 1-2b; z\right) \right.\\
                  &+ \left. \dfrac{ \Gamma(-2b)}{\Gamma(-b-a+\frac12)}\,{}_1F_1\left(b-a+\frac12; 1+2b; z\right)\right), 
                     \qquad 2b \not\in \mathbb Z. 
   \end{align*}
The associated {\it Mathematica} code is $\texttt{WhittakerW[}a,b,z\texttt{]}$. 

\begin{lemma}{\rm \cite[Lemma 3.1]{PoganySaboor}} For all positive $(\mu, a, \nu, \rho)$, for which $\rho + 
\nu^{-1}(\ell + \mu) \not\in \mathbb N$ when $\ell\in \mathbb N$, we have
   \begin{align} \label{M02}
        I_\mu(a, \nu, \rho) &= \int_0^\infty x^{\mu-1} (1+ ax^\nu)^\rho\, {\rm e}^{-x}\,{\rm d}x \nonumber \\
                                 &= \sum_{n \geq 0} \frac{(-1)^n\,(-\rho)_n}{n!}\,\left\{ a^n\, \gamma\big(\mu + \nu\, n, a^{-1/\nu}\big)
                                                      + a^{\rho-n}\, \Gamma\big(\mu + \nu(\rho- n), a^{-1/\nu}\big)\right\},
     \end{align}
where $\gamma(a, z) = \Gamma(a) - \Gamma(a, z), \Re(a)>0$ signifies the {\it lower incomplete Gamma function}.
\end{lemma}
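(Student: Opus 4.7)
\bigskip

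\noindent\textbf{Proof plan.} The natural dividing point for the integrand is $x_0 = a^{-1/\nu}$, where $ax^\nu = 1$. On $[0,x_0]$ we have $ax^\nu\le 1$ and on $[x_0,\infty)$ we have $ax^\nu\ge 1$, so on each piece a binomial expansion converges. Accordingly, I would split
\[
I_\mu(a,\nu,\rho) = \int_0^{x_0} x^{\mu-1}(1+ax^\nu)^\rho e^{-x}\,{\rm d}x + \int_{x_0}^{\infty} x^{\mu-1}(1+ax^\nu)^\rho e^{-x}\,{\rm d}x =: J_1 + J_2,
\]
and treat the two pieces separately with the Newton generalized binomial series in the form
\[
(1+z)^\rho = \sum_{n\ge 0} \binom{\rho}{n} z^n = \sum_{n\ge 0} \frac{(-1)^n(-\rho)_n}{n!}\, z^n, \qquad |z|\le 1,
\]
which is a straightforward consequence of $\binom{\rho}{n} = (-1)^n(-\rho)_n/n!$.

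For $J_1$ I would set $z = ax^\nu\in[0,1]$, plug in the binomial series, and exchange sum and integral. Termwise one gets
\[
\int_0^{x_0} x^{\mu-1+\nu n}\,e^{-x}\,{\rm d}x = \gamma\bigl(\mu+\nu n,\,a^{-1/\nu}\bigr),
\]
yielding the first family of terms $a^n\gamma(\mu+\nu n, a^{-1/\nu})$. For $J_2$ I would factor
\[
(1+ax^\nu)^\rho = (ax^\nu)^\rho\bigl(1+(ax^\nu)^{-1}\bigr)^\rho,
\]
with $(ax^\nu)^{-1}\in[0,1]$, expand again by Newton's binomial series, and integrate termwise. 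Each term produces
\[
a^{\rho-n}\int_{x_0}^\infty x^{\mu-1+\nu(\rho-n)}\,e^{-x}\,{\rm d}x = a^{\rho-n}\,\Gamma\bigl(\mu+\nu(\rho-n),\,a^{-1/\nu}\bigr),
\]
which is the second family. Adding $J_1$ and $J_2$ recovers \eqref{M02}.

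The main obstacle is justifying the term-by-term integration. On $[0,x_0]$ the binomial series in $ax^\nu$ converges uniformly on compact subsets of $\{|z|<1\}$ and one only needs a standard endpoint argument (or a direct Weierstrass majorant via $|(-\rho)_n/n!|=O(n^{-\rho-1})$ for $\rho\notin\mathbb N_0$) together with the bound $x^{\mu-1}e^{-x}\in L^1(0,x_0)$. On $[x_0,\infty)$ the extra factor $x^{\nu\rho}e^{-x}$ ensures absolute integrability, and the series in $(ax^\nu)^{-1}\in(0,1]$ is dominated uniformly by a summable sequence, so Fubini/Tonelli applies. The side condition $\rho+\nu^{-1}(\ell+\mu)\notin\mathbb N$ for $\ell\in\mathbb N$ serves to keep the arising incomplete-Gamma parameters clear of the trivializing integer resonances and is preserved throughout; apart from this bookkeeping, the derivation is routine once the split at $x_0$ is made.
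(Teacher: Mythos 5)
Your argument is correct and is the standard derivation: splitting at $x_0=a^{-1/\nu}$, applying the Newton binomial series to $(1+ax^\nu)^\rho$ on $[0,x_0]$ and to $(ax^\nu)^{\rho}(1+(ax^\nu)^{-1})^{\rho}$ on $[x_0,\infty)$, and integrating termwise (justified by $|(-\rho)_n/n!|=O(n^{-\rho-1})$ and the bound $a^n\gamma(\mu+\nu n,a^{-1/\nu})\le a^{-\mu/\nu}/(\mu+\nu n)$) reproduces \eqref{M02} exactly. The present paper quotes this lemma from \cite[Lemma 3.1]{PoganySaboor} without reproducing its proof, and your route coincides with the one used there, so there is nothing to add.
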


Moreover, specifying $\rho \in\mathbb N_0$ in \eqref{M02} $\mu, a, \nu$ remain positive, $I_\mu(a, \nu, \rho)$ one reduces to 
a polynomial in $a$ of ${\rm deg}(I_\mu) = \rho$:
   \begin{equation*} 
        I_\mu(a, \nu, \rho) = \Gamma(\mu) \, {}_2\Psi_0^* \Big[ \begin{array}{c} (-\rho, 1),\, (\mu, \nu) \\
                                  - \end{array}\Big| -a \Big]\,.
     \end{equation*}
Now, we formulate a set of summation results.

\begin{theorem} {\bf a.} For all $\alpha \in \mathbb R_+ \setminus \mathbb N$, denoting $a_0 = \left(\beta\lambda^{-k}\right)^{-1/(k-1)}$, 
we have
   \begin{align} \label{Q11}
         &\frac{\sin(\pi\alpha)}{\pi\alpha}\sum_{m,n \geq 0}\, \frac{(-\beta\lambda^{-k})^m}{m!}\,\frac{(-1)^n\,\Gamma(1-\alpha+n)}{n!}
                        \Bigg\{ \left(\frac\beta{\lambda^k}\right)^n\, \Big[ \gamma\big(\alpha +1 + km+(k-1)n, a_0\big) \nonumber \\
             &\qquad \quad+ \dfrac{\beta\,(k+1)}{\lambda^k}\,\gamma\big(\alpha  + k(m+1)+(k-1)n, a_0\big)
                                 + \frac{\beta^2 k}{\lambda^{2k}} \, \gamma\big(\alpha+k(m+2)-1+(k-1)n, a_0\big)\Big] \nonumber \\
             &\qquad \quad + \left(\frac\beta{\lambda^k}\right)^{\alpha-n}\,\left[ \frac{\lambda^k}{\beta}
                                   \Gamma\big(k(\alpha-1+ m)+2-(k-1)n, a_0\big) \right. \nonumber \\
             &\qquad  \quad \left. + (k+1)\,\Gamma\big(k(\alpha+m)+1-(k-1)n, a_0\big)+\frac{\beta\,k}{\lambda^k}\,
                                   \Gamma\big(k(\alpha+m+1)-(k-1)n, a_0\big)\right]\Bigg\}\, = 1\, .
     \end{align}

\noindent {\bf b.} For all $\alpha \in \mathbb N;\,(\lambda, \beta, k)>0$  we have 
   \begin{align*} 
        & \sum_{n=0}^{\alpha-1} \frac{(1-\alpha)_n}{n!}\, \left(-\frac\beta{\lambda^k}\right)^n\left\{
                      \Gamma(\alpha+1+kn)\,{}_1\Psi_0^* \Big[ \begin{array}{c} (\alpha+1+(k-1)n, k) \\ 
                    - \end{array}\Big| -\frac\beta{\lambda^k} \Big]\right. \\
            &\qquad + \frac{1}{\lambda^{k}\Gamma(k)} \,
                      {}_1\Psi_0 \Big[ \begin{array}{c}(\alpha+k(1+n), k) \\ -\end{array}\Big| -\frac\beta{\lambda^k} \Big]
                    + \frac{\beta (k)_{\alpha+kn}}{\lambda^k} \,
                      {}_1\Psi_0^* \Big[ \begin{array}{c} (\alpha+k+(k-1)n, k) \\ - \end{array}\Big| -\frac\beta{\lambda^k} \Big] 
											\nonumber \\
            &\qquad + \left. \frac{\beta }{\lambda^{2k}\,\Gamma(2k-1)} \,
                      {}_1\Psi_0 \Big[ \begin{array}{c}(\alpha+k(2+n)-1, k) \\ -\end{array}\Big| -\frac\beta{\lambda^k} \Big]\right\}
                    = \alpha. \medskip
     \end{align*}

\noindent {\bf c.} For all $\lambda>0, u >0, \alpha>0$ we have
     \begin{align*} 
        & \frac{\sin(\pi\alpha)}{\pi\alpha}\, \sum_{n\geq 0} \,\frac{\left(-\dfrac{u}{\lambda^2}\right)^n}{n!}\,
                   \Bigg\{ G_{13}^{31}\Bigg( \frac{(u\lambda)^2}4\Bigg|
                   \begin{array}{c} \tfrac{1-\alpha-3n}{2} \\  \tfrac{1+\alpha-3n}{2},\, 0,\, \tfrac12 \end{array}\Bigg)
                 + \frac3{\lambda^3}\, G_{13}^{31}\Bigg( \frac{(u\lambda)^2}4\Bigg|
                   \begin{array}{c} -\tfrac{1+\alpha+3n}{2} \\ \tfrac{\alpha-1-3n}{2},\, 0,\, \tfrac12 \end{array}\Bigg)\nonumber \\
        & \qquad + \frac{1}\lambda\, G_{13}^{31}\Bigg( \frac{(u\lambda)^2}4\Bigg|
                   \begin{array}{c} -\tfrac{\alpha+1+3n}{2} \\ \tfrac{\alpha-1-3n}{2},\, 0,\, \tfrac12 \end{array}\Bigg)
                 + \frac3{\lambda^4}\, G_{13}^{31}\Bigg( \frac{(u\lambda)^2}4\Bigg|
                   \begin{array}{c} -\tfrac{\alpha+3+3n}{2} \\ \frac{\alpha-3-3n}{2},\, 0,\, \tfrac12 \end{array}\Bigg) \Bigg\}
                                 = \frac{2\sqrt{\pi}}{u^{\alpha+1}\lambda^2}\, .\medskip
     \end{align*}

\noindent {\bf d.} For all $\lambda>0, b>0, \alpha>0$, we have
   \begin{align*}
        &\lambda\, \sum_{n \geq 0} \frac{(b\lambda)^n}{n!\,\Gamma(\alpha+1+2n)}\,
                        \Bigg\{ W_{\frac{\alpha-1}2-n, \frac{3\alpha}2+n}\left(b\lambda\right)
                    + 3\,\frac{W_{\frac{\alpha}2-1-n, \frac{3\alpha+1}2+n}\left(b\lambda\right)}{\lambda^2\sqrt{b\lambda}(\alpha+1+2n)}\,
                      \nonumber \\
            &\qquad\qquad + 2\, \frac{W_{\frac{\alpha-3}2-n, \frac{3\alpha}2+1+n}\left(b\lambda\right)}
                  {\lambda^5b(\alpha+1+2n)(\alpha+2+2n)} \Bigg\}
                    = \alpha\,\left( \frac\lambda{b}\right)^{\tfrac{\alpha+1}2}\,e^{-\frac12 b\lambda}\, \, .
     \end{align*}
\end{theorem}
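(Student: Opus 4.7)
The unifying strategy for all four identities of Theorem 3 is to evaluate the moment $\mathsf E(\lambda X + \beta X^k)^s = (\alpha)_s$ from Corollary \eqref{M01} in two different ways. On the right one reads off a simple Pochhammer symbol; on the left one expands the PDF integrand into nested series whose summands are the special functions appearing in the theorem. Taking $s = 1$ gives the factor $\alpha = (\alpha)_1$ in the right-hand sides of parts (a) and (d); parts (b) and (c) are analogous under the natural specialisations.

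The central algebraic step, identical across the four parts, is the factorisation
$(\lambda + k\beta x^{k-1})(\lambda + \beta x^{k-1})^{\alpha+s-1} = (\lambda + \beta x^{k-1})^{\alpha+s-2}\bigl[\lambda^2 + (k+1)\lambda\beta x^{k-1} + k\beta^2 x^{2(k-1)}\bigr]$,
which is the mechanism producing the three-term brace structure of every identity (coefficients $1,\ k+1,\ k$ in part (a), and $1,\ 3,\ 2$ in parts (c)--(d) when $k=2$). Combining this with the Taylor series $e^{-\beta x^k} = \sum_m (-\beta x^k)^m/m!$ (the source of the outer summation index) and substituting $t = \lambda x$ with $a := \beta\lambda^{-k}$, the left-hand side becomes a double sum of integrals of the form $\int_0^\infty t^{M-1}(1+at^{k-1})^{\rho}e^{-t}\,dt$ with $\rho = \alpha+s-2$. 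These are instances of $I_\mu(a,k-1,\rho)$ from Lemma 1, and the four parts differ only in how this integral is represented: via \eqref{M02} directly (incomplete Gammas in (a)); via the integer-$\alpha$ truncation of \eqref{M02} (Fox--Wright $\Psi$ in (b)); via the Mellin--Barnes contour of the Meijer $G_{13}^{31}$-function (in (c)); and via the Kummer/Whittaker integral representation (e.g.\ DLMF 13.16.5) as $W$-functions (in (d)).

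For part (d) specifically ($k=2$ together with the identification $b := \lambda/\beta$), the Whittaker argument emerges naturally as $\lambda^2/\beta = b\lambda$, and the prefactor $(\lambda/b)^{(\alpha+1)/2}e^{-b\lambda/2}$ on the right is assembled from the three $e^{-z/2}z^{\mu+1/2}/\Gamma(\mu-\kappa+1/2)$ normalisations that each Whittaker function contributes. The $\sin(\pi\alpha)/(\pi\alpha)$ prefactor of part (a) arises from the Euler reflection formula $\Gamma(\alpha)\Gamma(1-\alpha) = \pi/\sin(\pi\alpha)$ combined with $\Gamma(n+1-\alpha) = (1-\alpha)_n\Gamma(1-\alpha)$---exactly the rewriting that converts the coefficient $(-1)^n\Gamma(1-\alpha+n)/n!$ into the form $(-\rho)_n(-1)^n/n!$ required by \eqref{M02} with $\rho = \alpha-1$. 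The chief technical obstacle I anticipate is the careful bookkeeping for parts (c) and (d): matching each of the three Whittaker indices in (d) (for instance $((\alpha-1)/2-n,\ 3\alpha/2+n)$ in the first summand), and collecting the accumulating powers $\lambda^j b^k$, the denominators $(\alpha+1+2n)(\alpha+2+2n)$, and the radical $\sqrt{b\lambda}$, so that they emerge in precisely the stated form---likewise for the Meijer parameters in part (c).
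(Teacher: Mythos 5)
Your proposal follows essentially the same route as the paper: both rest on the $s=1$ case of \eqref{M01}, i.e.\ $\lambda\,\mathsf E X+\beta\,\mathsf E X^k=(\alpha)_1=\alpha$, with the left--hand side expanded via the Maclaurin series of ${\rm e}^{-\beta x^k}$ and Lemma 1 for part {\bf a.} (resp.\ the Fox--Wright, Meijer $G$ and Whittaker moment representations of $\mathsf E X^r$ from \cite{PoganySaboor} for parts {\bf b.}--{\bf d.}), and your factorisation $(\lambda+k\beta x^{k-1})(\lambda+\beta x^{k-1})$ is just the integrand--level version of the paper's step of setting $r=1$ and $r=k$ in the two--term formula \eqref{N1} and adding, which is precisely what merges the middle terms into the coefficient $k+1$. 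One small correction: part {\bf c.} corresponds to $k=3$ (not $k=2$), where the two--term Meijer--$G$ moment formula produces four non--merging $G$--terms rather than a three--term brace; this is a descriptive slip only and does not affect the validity of your strategy.
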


\begin{proof} \,{\bf a.} Let us consider the rv $X \sim {\rm GEEW}(\theta), \theta = (\lambda, \beta, k, \alpha)$ having {\rm PDF}
   \begin{align*}
      f(x) &= \frac{\lambda^\alpha}{\Gamma(\alpha)} x^{\alpha-1}\left(1 + \frac\beta\lambda x^{k-1}\right)^{\alpha-1}\,
              {\rm e}^{-(\lambda x + \beta x^k)}\, \mathsf 1_{\mathbb R_+}(x) \nonumber \\
           &\qquad + \frac{\lambda^{\alpha-1} \beta k}{\Gamma(\alpha)} x^{k+\alpha-2}
              \left(1 + \frac\beta\lambda x^{k-1}\right)^{\alpha-1}\, {\rm e}^{-(\lambda x + \beta x^k)}\, \mathsf 1_{\mathbb R_+}(x).
   \end{align*}
By expanding the exponential term $\exp\{-\beta x^k\}$ into the Maclaurin series and using \eqref{M02} we get $\mathsf EX^r$ as the linear 
combination of two $I_\mu(a, \nu, \rho)$ integrals:
   \begin{equation} \label{N1} 
      \mathsf EX^r = \frac{\lambda^{-r}}{\Gamma(\alpha)} \sum_{m\geq 0} \dfrac{(-\beta)^m}{\lambda^{km} \, m!} 
                     \left\{ I_{r+\alpha+km}\left( \frac\beta{\lambda^k}, k-1, \alpha-1\right) 
                   + \frac{\beta k}{\lambda^k}\, I_{r+\alpha+k(m+1)-1}\left( \frac\beta{\lambda^k}, k-1, \alpha-1\right)\right\} .
   \end{equation}
Now, highlighting the case $s=1$ of \eqref{M01} which leads to $\lambda \,\mathsf E\,X + \beta \mathsf E\, X^k = \alpha$ and 
taking $r=1, k$ above in \eqref{N1} respectively, we deduce the formula \eqref{Q11}. \medskip

\noindent As to the assertions {\bf b.} it is enough the point out the result \cite[Theorem 2]{PoganySaboor} 
   \begin{align} \label{OX1}
        &\mathsf EX^r = \frac{(r)_\alpha}{\lambda^r} \sum_{n=0}^{\alpha-1} \frac{(1-\alpha)_n(r+\alpha)_{kn}}{n!}\,
                        {}_1\Psi_0^* \Big[ \begin{array}{c} (r+\alpha+(k-1)n, k) \\ - \end{array}\Big| -\frac\beta{\lambda^k} \Big]
                        \, \left(-\frac\beta{\lambda^k}\right)^n \nonumber \\
               &\quad + \frac{(r+k-1)_\alpha}{\lambda^{r+k}} \sum_{n=0}^{\alpha-1} \frac{(1-\alpha)_n(r+\alpha+k-1)_{kn}}{n!}\,
                        {}_1\Psi_0^* \Big[ \begin{array}{c}(r+\alpha-1+k(n+1), k) \\ -\end{array}\Big| -\frac\beta{\lambda^k} \Big]
                        \, \left(-\frac\beta{\lambda^k}\right)^n\, ,
   \end{align}
valid for a rv $X \sim {\rm GEEW}(\theta)$ where $\alpha \in \mathbb N;\,(\lambda, \beta, k)>0$ and for all 
$r> \max\{-\alpha, 1-\alpha -k\}$. Repeating the proving procedure of {\bf a.} by setting $r=1, k$ in \eqref{OX1}, we arrive at 
the stated formula {\bf b.} \medskip

\noindent {\bf c.} Similarly, having in mind \cite[Theorem 3]{PoganySaboor} constituted for a rv 
$X \sim {\rm GEEW}(\lambda,(u\lambda)^{-2},3,\alpha)$, for which, under constraints $\lambda>0, u >0, \alpha>-1$ and for all 
$r> -1$, there holds
   \begin{align*}
        \mathsf EX^r &= \frac{u^{r+\alpha}\lambda^r\, \sin(\pi\alpha)}{2\,\pi^{\frac32}}
                \sum_{n\geq 0} \Bigg\{ G_{13}^{31}\Bigg( \frac{(u\lambda)^2}4\Bigg|
                \begin{array}{c} 1-\tfrac{r+\alpha+3n}{2} \\ 1-\frac{r-\alpha+3n}{2},\, 0,\, \tfrac12 \end{array}\Bigg) \nonumber \\
        &\qquad \qquad \qquad + \frac3{\lambda^3}\, G_{13}^{31}\Bigg( \frac{(u\lambda)^2}4\Bigg|
                \begin{array}{c} -\tfrac{r+\alpha+3n}{2} \\ \frac{\alpha-r-3n}{2},\, 0,\, \tfrac12 \end{array}\Bigg)
                \Bigg\} \,\dfrac{\left(-\dfrac u{\lambda^2}\right)^n}{n!} \, ,
     \end{align*}
we infer the stated summation which involves Meijer $G$ function terms.  \medskip   

\noindent Finally, statement {\bf d.} we achieve employing \cite[Theorem 4]{PoganySaboor}. When $X \sim {\rm GEEW}(\lambda, 
(b\lambda)^{-1}, 2, \alpha)$, then for all $\lambda>0, b >0, \alpha>-1$ and for all $r> -1$, we have
   \begin{align*} 
        \mathsf EX^r &= \left( \frac b\lambda\right)^{\tfrac{r+\alpha}2}\,e^{\frac12 b\lambda}\,
                            \sum_{n \geq 0} \frac{(b\lambda)^n}{n!\,\Gamma(r+\alpha+2n)}\,
                                            \Bigg\{ W_{\tfrac{\alpha-r}2-n, \tfrac{3\alpha+r-1}2+n}\left(b\lambda\right) \nonumber \\
                         &\qquad + \frac2{\lambda^2\sqrt{b\lambda}(r+\alpha+2n)}\,
                                          W_{\tfrac{\alpha-r-1}2-n, \tfrac{3\alpha+r}2+n}\left(b\lambda\right) \Bigg\}\, .
     \end{align*}
Now, the same treatment of the restricted formula \eqref{M01} under $s=1$ finishes the proof. 
\end{proof} 

\begin{remark} 
{\rm Similar summations neither contain the celebrated formula collection by Hansen \cite{Hansen} nor the comprehensive and 
exhaustive classical monograph series Prudnikov {\it et. al.} \cite{PBM}.}
\end{remark} 

\begin{remark} 
{\rm Considering the result of Corollary 1 for $s \in \mathbb N_2 = \{2, 3, \cdots\}$, expanding the 
left--hand--side expression in \eqref{M01} into a binomial sum, we can conclude a set of further summation results following the traces 
of the proof of Theorem 2. However, using into account this simple derivation procedure we lose the elegance in the resulting 
very complicated, hardly reducible resulting sum--product expressions. }  
\end{remark}

\end{document}